\documentclass[letter]{amsart}
\usepackage{amsmath, amsfonts, amssymb, amsthm}
\usepackage[bookmarks, bookmarksdepth=2, colorlinks=true, linkcolor=blue, citecolor=blue, urlcolor=blue]{hyperref}
\usepackage{color}
\usepackage{cleveref}
\usepackage{algorithm, algorithmicx}
\usepackage[noend]{algpseudocode}
\addtolength{\oddsidemargin}{-1in}
\addtolength{\evensidemargin}{-1in}
\addtolength{\topmargin}{-0.5in}
\addtolength{\textwidth}{1.9in}
\addtolength{\textheight}{0.5in}
\newcommand{\N}{\mathbb{N}}
\newcommand{\Z}{\mathbb{Z}}

\newcommand{\C}{\mathbb{C}}
\newcommand{\A}{\mathbb{A}}
\newcommand{\PP}{\mathbb{P}}
\DeclareMathOperator{\Jac}{Jac}

\DeclareMathOperator{\Gr}{Gr}
\newtheorem{prop}{Proposition}
\newtheorem{theorem}[prop]{Theorem}
\theoremstyle{definition}

\newtheorem{example}[prop]{Example}

\everymath{\displaystyle}

\begin{document}
\title{\bf{Numerical Implicitization}}

\author{Justin Chen}
\address{School of Mathematics, Georgia Institute of Technology,
Atlanta, Georgia, 30332 U.S.A.}
\email{jchen646@math.gatech.edu}

\author{Joe Kileel}
\address{Program in Applied and Computational Mathematics, Princeton University, Princeton, New Jersey, 08544 U.S.A.}
\email{jkileel@math.princeton.edu}

\subjclass[2010]{{14-04, 14Q99, 65H10, 65H20}}
\keywords{numerical algebraic geometry, implicitization, homotopy continuation, monodromy, interpolation}

\begin{abstract}
We present the {\textit{NumericalImplicitization}} package for {\textit{Macaulay2}}, which allows for user-friendly computation of the invariants of the image of a polynomial map, such as dimension, degree, and Hilbert function values.  This package relies on methods of numerical algebraic geometry, including homotopy continuation and monodromy.
\end{abstract}

\maketitle

\noindent
\textsc{Introduction.} Many varieties of interest in algebraic geometry and its applications are usefully described as images of polynomial maps, via a parametrization. Implicitization is the process of converting a parametric description of a variety into an intrinsic---or implicit---description. Classically, implicitization refers to the procedure of computing the defining equations of a parametrized variety, and in theory this is accomplished by finding the kernel of a ring homomorphism, via Gr\"obner bases. In practice however, symbolic Gr\"obner basis computations are often time consuming, even for medium scale problems, and do not scale well with respect to the size of the input.

Despite this, one would often like to know basic information about a parametrized variety, even when symbolic methods are prohibitively expensive (in terms of computation time). Examples of such information are discrete invariants such as the dimension, the degree, or Hilbert function values. Other examples include Boolean tests, for example whether or not a particular point lies on a parametrized variety. The goal of this \textit{Macaulay2} \cite{M2} package is to provide such information; in other words, to \textit{numerically implicitize} a parametrized variety by using methods of numerical algebraic geometry. 
\textit{NumericalImplicitization} builds on top of existing numerical algebraic geometry software: \textit{NAG4M2} \cite{Ley}, \textit{Bertini} \cite{Ber} and \textit{PHCpack} \cite{PHC}.  Each of these can be used for path tracking
and point sampling; by default the native software \texttt{M2engine} in \textit{NAG4M2} is used.  The latest version of the code and documentation can be found at
\url{https://github.com/Joe-Kileel/Numerical-Implicitization}.
\vspace{0.3cm}

\noindent
\textsc{Notation.} The following notation will be used throughout this article: 
\begin{itemize}
\item $X \subseteq \A^n$ is a \textit{source variety}, defined by an ideal $I = \langle g_{1}, \ldots, g_{r} \rangle$ in the polynomial ring $\C[x_1, \ldots, x_n]$
\item $F : \A^{n} \to \A^{m}$ is a regular map sending $x \mapsto (f_1(x), \ldots, f_m(x))$, where $f_i \in \C[x_1, \ldots, x_n]$
\item $Y$ is the Zariski closure of the image $\overline{F(X)} = \overline{F(V(I))} \subseteq \A^{m},$ the \textit{target variety} under consideration
\item $\widetilde{Y} \subseteq \PP^{m}$ is the projective closure of $Y$, with respect to the standard embedding $\A^{m} \subseteq \PP^{m}$.
\end{itemize}

Currently, \textit{NumericalImplicitization} is implemented for integral varieties $X$.  Equivalently, the ideal $I$ is prime. Since numerical methods are used, we always work with a floating-point representation for complex numbers. Moreover, $\widetilde{Y}$ is internally represented by its affine cone. This is because it is easier to work with affine, as opposed to projective, coordinates; at the same time, this suffices to find the invariants of $\widetilde{Y}$.
\vspace{0.3cm}

\noindent
\textsc{Sampling.} All the methods in this package rely on the ability to sample general points on $X$. To this end, the method {\tt{numericalSourceSample}} is provided to allow the user to sample general points on $X$. This method works by computing a witness set for $X$, via a numerical irreducible decomposition of $I$---once this is known, points on $X$ can be quickly sampled.

One way to view the difference in computation time between symbolic and numerical methods is that the upfront cost of computing a Gr\"obner basis is replaced with the upfront cost of computing a numerical irreducible decomposition, which is used to sample general points. However, if $X = \A^n$, then sampling is done by generating random tuples, so in this unrestricted (or rational) parametrization case, the upfront cost of numerical methods becomes negligible. Another situation where the cost of computing a numerical irreducible decomposition can be avoided is if the user can provide a single point on $X$: in this case, {\tt{numericalSourceSample}} can use the given point to quickly generate new general points on $X$ via path tracking.
\vspace{0.4cm}

\noindent
\textsc{Dimension.} The most basic invariant of an algebraic variety is its dimension. To compute the dimension of the image of a variety numerically, we use the following theorem \cite[III.10.4--10.5]{Har}:

\begin{theorem} Let $F : X \rightarrow Y$ be a dominant morphism of irreducible varieties over $\C$. Then there is a Zariski open subset $U \subseteq X$ such that for all $x \in U$, the induced map on tangent spaces $dF_x : T_xX \to T_{F(x)}Y$ is surjective. 
\end{theorem}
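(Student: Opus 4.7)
The plan is to deduce the statement from generic smoothness in characteristic zero. First, I would restrict to the smooth loci of $X$ and $Y$, which are nonempty Zariski open subsets since both are integral $\C$-varieties. Since $F$ is dominant and $X$ is irreducible, $F^{-1}(Y_{\text{smooth}})$ is open and dense, so without loss of generality I may assume that both $X$ and $Y$ are smooth.

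Next, I would invoke the generic smoothness theorem (e.g.\ Hartshorne III.10.7): for a dominant morphism $F\colon X \to Y$ of integral varieties over a field of characteristic zero, there exists a nonempty Zariski open $U \subseteq X$ on which $F$ is a smooth morphism. The crucial algebraic input is that the function field extension $K(X)/K(Y)$ is separable, which is automatic in characteristic zero. Since a smooth morphism of smooth varieties has surjective differential at every point of its domain---\'etale-locally it is a linear projection from affine space---this $U$ serves as the desired open subset.

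The main obstacle is simply invoking the correct black-box result. An elementary alternative is to examine the cotangent exact sequence
\[ \Omega^{1}_{K(Y)/\C} \otimes_{K(Y)} K(X) \;\longrightarrow\; \Omega^{1}_{K(X)/\C} \;\longrightarrow\; \Omega^{1}_{K(X)/K(Y)} \;\longrightarrow\; 0 \]
at the generic point of $X$: separability of $K(X)/K(Y)$ in characteristic zero forces the first map to be injective, and this injectivity spreads out to an open subset by coherence of the sheaves of K\"ahler differentials on $X$ and $Y$. Dualizing the cotangent map $F^{*}\colon \Omega^{1}_{Y} \to \Omega^{1}_{X}$ to a map of tangent spaces on that open subset yields the claimed surjectivity of $dF_{x}$, giving the theorem without appealing to the full machinery of smooth morphisms.
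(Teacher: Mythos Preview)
Your proposal is correct and takes essentially the same approach as the paper, whose entire proof reads: ``This is an immediate corollary of \textit{generic smoothness} [Hartshorne III.10.5] and the preceding [III.10.4].'' You have simply spelled out the reduction to the smooth loci and the passage from smoothness of the morphism to surjectivity of the differential (the content of III.10.4), and your alternative argument via the cotangent sequence at the generic point is a pleasant extra that the paper does not include.
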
 \label{dimThm}



In the setting above, since the singular locus $\operatorname{Sing} Y$ is a proper closed subset of $Y$, for general $y = F(x) \in Y$ we
have that $\dim Y = \dim T_yY = \dim dF_x(T_xX) = \dim T_xX - \dim \ker dF_x$. Now $T_xX$ is the kernel of the Jacobian matrix of $I$ evaluated at $x$, given by $ \Jac(I)(x) = ((\partial g_i /\partial x_j)(x))_{1 \le i \le r, \, 1 \le j \le n}$, and $\ker dF_x$ is the kernel of the
 Jacobian of $F$ evaluated at $x$, intersected with $T_xX$.  Explicitly, $\ker dF_x$ is the kernel of the $(r+m) \times n$ matrix:

\[
\begin{bmatrix}
\Jac(I)(x) \\[5pt]
\Jac(F)(x)
\end{bmatrix} = 
\begin{scriptsize}
\begin{bmatrix}\\[-5.58pt]
\frac{\partial g_1}{\partial x_1}(x) & \ldots & \frac{\partial g_1}{\partial x_n}(x) \\[2pt]
\vdots & \ddots & \vdots \\[2pt]
\frac{\partial g_r}{\partial x_1}(x) & \ldots & \frac{\partial g_r}{\partial x_n}(x) \\[8pt]
\frac{\partial f_1}{\partial x_1}(x) & \ldots & \frac{\partial f_1}{\partial x_n}(x) \\[2pt]
\vdots & \ddots & \vdots \\[2pt]
\frac{\partial f_m}{\partial x_1}(x) & \ldots & \frac{\partial f_m}{\partial x_n}(x) \\[5pt]
\end{bmatrix}
\end{scriptsize}
\]
\vspace{0.05cm}

\noindent We compute these kernel dimensions numerically to obtain $\dim Y$.

\begin{example}
Let $Y \subseteq \textup{Sym}^{4}(\C^{5}) \cong \A^{70}$ be the variety of $5 \times 5 \times 5 \times 5$ symmetric tensors of border rank $\leq 14$. Equivalently, $Y$ is the affine cone over $\sigma_{14}(\nu_{4}(\PP^{4}))$, the $14^{\textup{th}}$ secant variety of the fourth Veronese embedding of $\PP^{4}$.  Naively, one expects $\textup{dim}(Y) = 14 \cdot 4 + 13 + 1 = 70$.  In fact, $\textup{dim}(Y) = 69$ as verified by the following code:

\vspace{0.05cm}

\begin{verbatim}
Macaulay2, version 1.13
i1 : needsPackage "NumericalImplicitization"
i2 : R = CC[s_(1,1)..s_(14,5)];
i3 : F = sum(1..14, i -> basis(4, R, Variables=>toList(s_(i,1)..s_(i,5))));
i4 : elapsedTime numericalImageDim(F, ideal 0_R)
     -- 0.0767826 seconds elapsed
o4 = 69
\end{verbatim}

\noindent This example is the largest exceptional case from the celebrated work \cite{AH}.
\end{example}

\vspace{0.2cm}

\noindent
\textsc{Hilbert function.} We now turn to the problem of determining the Hilbert function of $\widetilde{Y}$. If $\widetilde{Y} \subseteq \PP^{m}$ is a projective variety given by a homogeneous ideal $J \subseteq \C[y_{0}, \ldots, y_{m}]$, then the Hilbert function of $\widetilde{Y}$ at an argument $d \in \N$ is by definition the vector space dimension of the $d^\text{th}$ graded part of $J$, namely $H_{\widetilde{Y}}(d) := \dim J_d$. This counts the maximum number of linearly independent degree $d$ hypersurfaces in $\PP^{m}$ containing $\widetilde{Y}$. 

To compute the Hilbert function of $\widetilde{Y}$ numerically, we use \textit{multivariate polynomial interpolation}. For a fixed argument $d \in \N$, let $\{p_1, \ldots, p_N\}$ be a set of $N$ general points on $\widetilde{Y}$.  For $1 \le i \le N$, consider an $i \times \binom{m+d}{d}$ interpolation matrix $A^{(i)}$ with rows indexed by points $\{p_1, \ldots, p_i\}$ and columns indexed by degree $d$ monomials in $\C[y_{0}, \ldots, y_{m}]$, whose entries are the values of the monomials at the points. A vector in the kernel of $A^{(i)}$ corresponds to a choice of coefficients for a homogeneous degree $d$ polynomial that vanishes on $p_1, \ldots, p_i$. If $i$ is large, then one expects such a form to vanish on the entire variety $\widetilde{Y}$. The following theorem makes this precise:

\begin{theorem} \label{hilbertFunctionThm}
Let $\{p_1, \ldots, p_{s+1}\}$ be a set of general points on $\widetilde{Y}$, and let $A^{(i)}$ be the interpolation matrix above. If $\dim \ker A^{(s)} = \dim \ker A^{(s+1)}$, then $\dim \ker A^{(s)} = \dim J_d$.
\end{theorem}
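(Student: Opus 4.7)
The plan is to chase the descending chain of kernels and identify stabilization with reaching $J_d$, using only the irreducibility of $\widetilde{Y}$ and the finite-dimensionality of the interpolation problem. First I would record the basic containment $J_d \subseteq \ker A^{(i)}$ for every $i$: any homogeneous form $f \in J_d$ vanishes identically on $\widetilde{Y}$, hence in particular at $p_1, \ldots, p_i$, so its coefficient vector lies in $\ker A^{(i)}$. Writing $V_i := \ker A^{(i)}$, this gives a descending chain $V_0 \supseteq V_1 \supseteq \cdots \supseteq J_d$ of subspaces of $\C[y_0, \ldots, y_m]_d$, and since each new point imposes at most one linear condition, consecutive dimensions differ by $0$ or $1$.

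The core step is the claim that if $V_s \supsetneq J_d$, then for a generic $p_{s+1} \in \widetilde{Y}$ one has $V_{s+1} \subsetneq V_s$. Pick any $f \in V_s \setminus J_d$. Since $f \notin I(\widetilde{Y})$, the hypersurface section cut out by $f$ meets $\widetilde{Y}$ in a proper Zariski-closed subset; by irreducibility of $\widetilde{Y}$ (which holds because $X$ is integral, so $F(X)$ and its projective closure are irreducible), its complement $U_f \subseteq \widetilde{Y}$ is a dense open. For any $p_{s+1} \in U_f$ we have $f(p_{s+1}) \neq 0$, so $f \notin V_{s+1}$, yielding $V_{s+1} \subsetneq V_s$ and in particular $\dim V_{s+1} < \dim V_s$.

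Finally I would unify the genericity conditions. By upper semicontinuity of kernel dimension in the matrix entries, there is a dense open $W \subseteq \widetilde{Y}^s$ on which $\dim V_s$ attains its generic minimum value. For $(p_1, \ldots, p_s) \in W$, either $V_s = J_d$---in which case any $p_{s+1}$ gives $V_{s+1} = J_d$ and the conclusion holds directly---or $V_s \supsetneq J_d$, in which case the previous paragraph produces a dense open of $p_{s+1}$ on which $\dim V_{s+1} < \dim V_s$. In contrapositive form, this says that on a dense open subset of $\widetilde{Y}^{s+1}$---the meaning of ``general'' in the statement---the equality $\dim \ker A^{(s)} = \dim \ker A^{(s+1)}$ forces $V_s = J_d$, and hence $\dim \ker A^{(s)} = \dim J_d$. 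The main subtlety will be bookkeeping these nested generic loci so that one dense open subset of $\widetilde{Y}^{s+1}$ coherently witnesses both hypothesis and conclusion; the underlying geometric content is the familiar interpolation principle that any form vanishing on sufficiently many general points of an irreducible variety lies in its ideal, and the novelty of the statement is the effective one-step stabilization criterion.
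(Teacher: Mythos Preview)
Your proposal is correct, and the underlying idea---irreducibility of $\widetilde{Y}$ plus semicontinuity of kernel dimension---is the same as the paper's. The organization differs in one instructive way. The paper introduces \emph{universal} interpolation matrices $\mathcal{A}^{(i)}$ over $\C[y_{0,1},\ldots,y_{m,i}]$ and defines $r_i$ to be the generic rank upon specialization to $\widetilde{Y}^{\times i}$; the key observation is that \emph{every} specialization has rank $\le r_i$, while a general one has rank exactly $r_i$. From $r_s=r_{s+1}$ it then follows that for general $p_1,\ldots,p_s$ and \emph{arbitrary} $q\in\widetilde{Y}$, the $(s{+}1)$-row matrix has rank sandwiched between $r_s$ (submatrix) and $r_{s+1}=r_s$ (universal bound), hence every $f\in\ker A^{(s)}$ vanishes at every $q$, giving $\ker A^{(s)}\subseteq J_d$ directly. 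This cleanly bypasses exactly the ``nested generic loci'' bookkeeping you flag as the main subtlety: the ``for all $q$'' comes for free from the rank bound rather than from patching fiberwise dense opens into a single open set of $\widetilde{Y}^{s+1}$. Your route is more hands-on and avoids introducing the universal object; the paper's buys a one-line finish at precisely the step you anticipate needing care.
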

\begin{proof}
Identifying $v \in \ker A^{(i)}$ with the form in $\C[y_{0}, \ldots, y_{m}]$ of degree $d$ having $v$ as its coefficients,
it suffices to show that $\ker A^{(s)} = J_d$. 
If $h \in J_d$, then $h$ vanishes on all of $\widetilde{Y}$, in particular on $\{p_1, \ldots, p_s\}$, so $h \in \ker A^{(s)}$. 
For the converse $\ker A^{(s)} \subseteq J_d$, we consider the universal interpolation matrices over $\C[y_{0,1}, \, y_{1,1}, \, \ldots, \, y_{m, i}]$:

$$\mathcal{A}^{(i)} := \begin{bmatrix} y_{0,1}^{d} & y_{0,1}^{d-1}y_{1,1} & \ldots & y_{m,1}^{d} \\[3pt] 
y_{0,2}^{d} & y_{0,2}^{d-1}y_{1,2} & \ldots & y_{m,2}^{d} \\[3pt]
\vdots & \vdots & \ddots & \vdots \\[3pt]
y_{0,i}^{d} & y_{0,i}^{d-1}y_{1,i} & \ldots & y_{m,i}^{d}
\end{bmatrix}$$

\vspace{0.15cm}
\noindent Set $r_{i} := \min \, \{r \in \Z_{\geq 0} \, | \, \textup{all } (r+1)\textup{-minors of } \mathcal{A}^{(i)} \textup{ lie in the ideal of } \widetilde{Y}^{\times i} \subseteq (\PP^{m})^{\times i} \}$.
Then any specialization of $\mathcal{A}^{(i)}$ to $i$ points in $\widetilde{Y}$ is a matrix over $\C$ of rank $\leq r_i$; moreover if the points are general, then the specialization has rank exactly $r_i$, since $\widetilde{Y}$ is irreducible. In particular $\textup{rank}(A^{s}) = r_s$ and $\textup{rank}(A^{s+1}) = r_{s+1}$, so $\dim \ker A^{(s)} = \dim \ker A^{(s+1)}$ implies that $r_s = r_{s+1}$.  
It follows that specializing $\mathcal{A}^{(s+1)}$ to $p_1, p_2, \ldots, p_s, q$ for \textit{any} $q \in \widetilde{Y}$ gives a rank $r_s$ matrix.  Hence, every degree $d$ form in $\ker A^{(s)}$ evaluates to 0 at every $q \in \widetilde{Y}$.  Since $\widetilde{Y}$ is reduced, we deduce that $\ker A^{(s)} \subseteq J_d$.
\end{proof}

It follows from \Cref{hilbertFunctionThm} that the integers $\dim \ker A^{(1)}, \dim \ker A^{(2)}, \ldots$ decrease by exactly $1$, until the first instance where they fail to decrease, at which point they stabilize: $\dim \ker A^{(i)} = \dim \ker A^{(s)}$ for $i \ge s$. This stable value is the value of the Hilbert function, $\dim \ker A^{(s)} = H_{\widetilde{Y}}(d)$. In particular, it suffices to compute $\dim \ker A^{(N)}$ for $N = \binom{m+d}{d}$, so one may assume the interpolation matrix is square. Although this may seem wasteful (as stabilization may have occurred with fewer rows), this is indeed what \texttt{numericalHilbertFunction} does, due to the algorithm used to compute kernel dimension numerically. To be precise, kernel dimension is found via a singular value decomposition (SVD)---namely, if a gap (the ratio of consecutive singular values) exceeds the option {\tt{SVDGap}} (with default value $10^5$), then this is taken as an indication that all singular values past this gap are numerically zero. On example problems, it was observed that taking only one more additional row than was needed often did not reveal a satisfactory gap in singular values.  In addition, numerical stability is improved via preconditioning on the interpolation matrices---namely, each row is normalized to have Euclidean norm $1$ before computing the SVD. Furthermore, for increased computational efficiency, the option {\tt UseSLP} allows for the usage of straight-line programs in creating interpolation matrices.

\begin{example} \label{canonicalCurveEx}
Let $X$ be a random canonical curve of genus 4 in $\PP^3$, so $X$ is the complete intersection of a random quadric and cubic.
Let $F : \PP^3 \dashrightarrow \PP^2$ be a projection by 3 random cubics.
Then $\widetilde{Y}$ is a plane curve of degree $3^{\dim(\widetilde{Y})} \cdot \deg(X) = 3 \cdot 2 \cdot 3 =18$,
so the ideal of $\widetilde{Y}$ contains a single form of degree 18.  We verify this as follows:

\begin{verbatim}
i5 : R = CC[w_0..w_3]; I = ideal(random(2,R), random(3,R)); F = toList(1..3)/(i -> random(3,R));
i8 : elapsedTime T = numericalHilbertFunction(F, I, 18, Verbose=>false)
     -- 6.01226 seconds elapsed
o8 : a numerical interpolation table, indicating
     the space of degree 18 forms in the ideal of the image has dimension 1
\end{verbatim}

\vspace{0.1cm}

The output is a \texttt{NumericalInterpolationTable}, which is a \texttt{HashTable} storing the results of the interpolation computation described above.  From this, one can obtain a floating-point approximation to a basis of $J_d$.  This is done via the command \texttt{extractImageEquations}:

\begin{verbatim}
i9 : extractImageEquations T
o9 : | -.0000712719y_0^18+(.000317507-.000100639i)y_0^17y_1- ... |
\end{verbatim}

\vspace{0.2cm}
\noindent
The option \texttt{AttemptExact=>true} calls the Lenstra-Lenstra-Lov\'{a}sz algorithm to compute short equations over $\Z$.
\end{example}

\vspace{0.2cm}

\noindent
\textsc{Degree.}
After dimension, degree is the most basic invariant of a projective variety $\widetilde{Y} \subseteq \PP^m$.
Set $k := \dim(\widetilde{Y})$. For a general linear space $ L \in \textup{Gr}(\PP^{m-k}, \PP^{m})$ of complementary dimension to $\widetilde{Y}$,
the intersection $L \cap \widetilde{Y}$ is a finite set of reduced points. The degree of $\widetilde{Y}$ is by definition the cardinality of $L \cap \widetilde{Y}$, which is independent of the general linear space $L$. Thus one way to find $\deg(\widetilde{Y})$ is to fix a random $L_0$ and compute the set of points $L_0 \cap \widetilde{Y}$.

\textit{NumericalImplicitization} takes this approach, but the method used to find $L_0 \cap \widetilde{Y}$ is not the most obvious. First and foremost, we do not know the equations of $\widetilde{Y}$, so all solving must be done in $X$.
Secondly, we do \textit{not} compute $F^{-1}(L_0) \cap X$ from the equations of $X$ and the equations of $L_0$ pulled back under $F$, 
because fibers of $F$ may be positive-dimensional and of high degree.
Instead, \textit{monodromy} is employed to find $L_0 \cap \widetilde{Y}$.

To state the technique, 
we consider the map:
$$ \{ (L, y) \in \Gr(\PP^{m-k}, \PP^{m}) \times \widetilde{Y} \,\, | \,\, y \in L\} \, \subseteq \Gr(\PP^{m-k}, \PP^{m}) \times \widetilde{Y} \xrightarrow{\makebox[1cm]{$\rho_1$}} \, \Gr(\PP^{m-k}, \PP^{m})$$
where $\rho_{1}$ is projection onto the first factor.  There is a nonempty Zariski open subset $U \subseteq \Gr(\PP^{m-k}, \PP^{m})$ such that the restriction $\rho_{1}^{-1}(U) \rightarrow U$ is a $\textup{deg}(\widetilde{Y})$-to-1
covering map, namely $U$ equals the complement of the Hurwitz divisor from \cite{St}.  For a fixed generic basepoint $L_{0} \in U$, the fundamental group $\pi_{1}(U, L_{0})$ acts on the fiber $\rho_{1}^{-1}(L_{0}) = L_{0} \cap \widetilde{Y}$. 
This action is known as monodromy.  It is a key fact that irreducibility of $\widetilde{Y}$ implies the group homomorphism $\pi_{1}(U, L_{0}) \longrightarrow \textup{Sym}(L_{0} \cap \widetilde{Y}) \cong \textup{Sym}_{\deg(\widetilde{Y})}$ is surjective (see \cite[Theorem A.12.2]{SW}).

We compute the degree of $\widetilde{Y}$ by constructing a \textit{pseudo-witness set} for $\widetilde{Y}$, which is a numerical representation of a parameterized variety (see \cite{HS}).
First, we sample a general point $x \in X$, and translate a general linear slice $L_0$ so that $F(x) \in L_0 \cap \widetilde{Y}$. Then $L_0$ is moved around in a random loop of the form described in \cite[Lemma 7.1.3]{SW}. This loop pulls back to a homotopy in $X$, where we use the equations of $X$ to track $x$. The endpoint of the track is a point $x' \in X$ such that $F(x') \in L_0 \cap \widetilde{Y}$. If $F(x)$ and $F(x')$ are numerically distinct, then the loop has \textit{learned} a new point in $L_0 \cap \widetilde{Y}$; otherwise $x'$ is discarded. We then repeat this process of tracking points in $X$ over each known point in $L_0 \cap \widetilde{Y}$, via new loops.
In practice, if several consecutive loops
do not learn new points in $L_0 \cap \widetilde{Y}$, then we suspect that all of $L_0 \cap \widetilde{Y}$ has been calculated. To verify this, we pass to the \textit{trace test} (see \cite[Corollary 2.2]{SVW}), which provides a characterization for when a subset of $L_0 \cap \widetilde{Y}$ equals $L_0 \cap \widetilde{Y}$. If the trace test is failed, then $L_0$ is replaced by a new random $L_0'$ and preimages in $X$ of known points of $L_0 \cap \widetilde{Y}$ are tracked to those preimages of points of $L_0' \cap \widetilde{Y}$. Afterwards, monodromy for $L'_0 \cap \widetilde{Y}$ begins anew. If the trace test is failed \texttt{MaxAttempts} (by default 5) times, then the method exits with only a lower bound on $\deg(\widetilde{Y})$. To speed up computation, the option \texttt{MaxThreads} allows for loop tracking to be parallelized.

\begin{example}\label{ex:raicu}
Let $\widetilde{Y} = \sigma_2(\PP^1 \times \PP^1 \times \PP^1 \times \PP^1 \times \PP^1) \subseteq \PP^{31}$. We find that $\deg(\widetilde{Y}) = 3256$, using the commands below:

\vspace{0.1cm}

\begin{verbatim}
i10 : R = CC[a_1..a_5, b_1..b_5, t_0, t_1];
i11 : F1 = terms product(apply(toList(1..5), i -> 1 + a_i));
i12 : F2 = terms product(apply(toList(1..5), i -> 1 + b_i));
i13 : F = apply(toList(0..<2^5), i -> t_0*F1#i + t_1*F2#i);
i14 : elapsedTime pseudoWitnessSet(F, ideal 0_R, Repeats=>2, MaxThreads=>2)
Sampling point in source ...
Tracking monodromy loops ...
Points found: 2
Points found: 4
...
Points found: 3256
Running trace test ...
      -- 336.737 seconds elapsed
o14 = a pseudo-witness set, indicating
      the degree of the image is 3256
\end{verbatim}

\vspace{0.1cm}

\noindent From \cite[Theorem 4.1]{Rai}, it is known that the prime ideal $J$ of $\widetilde{Y}$ is generated by the $3 \times 3$ minors of all flattenings of $2^{\times 5}$ tensors, so we can confirm that $\deg(J) = 3256$.  However, the naive attempt to compute the degree of $\widetilde{Y}$ symbolically by taking the kernel of a ring map---from a polynomial ring in 32 variables---has no hope of finishing in any reasonable amount of time.

\end{example}

\vspace{0.2cm}

\noindent \textsc{Membership.} 
Classically, given a variety $Y \subseteq \A^m$ and a point $y \in \A^m$, we determine whether or not $y \in Y$
by finding set-theoretic equations of $Y$ (which generate the ideal of $Y$ up to radical), 
and then testing if $y$ satisfies these equations.
If a \texttt{PseudoWitnessSet} for $Y$ is available, then point membership in $Y$ can instead be verified by \textit{parameter homotopy}.  More precisely, \texttt{isOnImage} determines if $y$ lies in the constructible set $F(X) \subseteq Y$, as follows.  We fix a general affine linear subspace $L_y \subseteq \A^m$ of complementary dimension $m- \dim Y$ passing through $y$.   Then $y \in F(X)$ if and only if $y \in L_y \cap F(X)$, so it suffices to compute the set $L_y \cap F(X)$. 
Now, a \texttt{PseudoWitnessSet} for $Y$ provides
a general section $L \cap F(X)$, and preimages in $X$.  We move $L$ to $L_y$ as in \cite[Theorem 7.1.6]{SW}.  This pulls back to a homotopy in $X$, where we use the equations of $X$ to track the preimages.  Applying $F$ to the endpoints of the track gives all isolated points in $L_y \cap F(X)$ by \cite[Theorem 7.1.6]{SW}.  Since $L_y$ was general, the proof of \cite[Corollary 10.5]{Eis} shows
$L_y \cap F(X)$ is zero-dimensional, so this procedure computes the entire set $L_y \cap F(X)$.

\begin{example}
Let $Y \subseteq \A^{18}$ be defined by the resultant of three quadratic equations in three unknowns.  In
other words, $Y$ consists of all coefficients $(c_1, \ldots, c_{6}, d_{1}, \ldots, d_{6}, e_{1}, \ldots, e_{6}) \in \A^{18}$ such that the system

\vspace{-0.15cm}

\[
\begin{aligned}
0 &= c_{1} x^{2} + c_{2} xy + c_{3} xz + c_{4} y^2 + c_{5} yz + c_{6} z^2 \\
0 &= d_{1} x^{2} + d_{2} xy + d_{3} xz + d_{4} y^2 + d_{5} yz + d_{6} z^2\\
0 &= e_{1} x^{2} + e_{2} xy + e_{3} xz + e_{4} y^2 + e_{5} yz + e_{6} z^2
\end{aligned}
\]

\vspace{0.05cm}

\noindent admits a solution $(x:y:z) \in \PP^2$. 
Here $Y$ is a hypersurface, and a matrix formula for its defining equation was derived in \cite{ES},
using exterior algebra methods.  We rapidly determine point
membership in $Y$ numerically as follows.

\begin{verbatim}
i15 : R = CC[c_1..c_6, d_1..d_6, e_1..e_6, x, y, z];
i16 : I = ideal(c_1*x^2+c_2*x*y+c_3*x*z+c_4*y^2+c_5*y*z+c_6*z^2,
                d_1*x^2+d_2*x*y+d_3*x*z+d_4*y^2+d_5*y*z+d_6*z^2,
                e_1*x^2+e_2*x*y+e_3*x*z+e_4*y^2+e_5*y*z+e_6*z^2);
i17 : F = toList(c_1..c_6 | d_1..d_6 | e_1..e_6);
i18 : W = pseudoWitnessSet(F, I, Verbose=>false); -- Y has degree 12
i19 : p1 = first numericalImageSample(F, I); p2 = point random(CC^1, CC^#F);
i21 : elapsedTime (isOnImage(W, p1), isOnImage(W, p2))
      -- used 0.186637 seconds
o21 = (true, false)

\end{verbatim}
\end{example}

\vspace{-0.15cm}

\noindent \textsc{Acknowledgements.} We are grateful to Anton Leykin for his encouragement, and to Luke Oeding for testing \textit{NumericalImplicitization}. We thank David Eisenbud and Bernd Sturmfels for helpful discussions, and the anonymous referee for insightful comments. This work has been partially supported by NSF DMS-1001867.

\end{document}